\newcommand{\PP}{\mathbb{P}}
\newcommand{\KK}{\mathbb{K}}
\newcommand{\NN}{\mathbb{N}}
\newcommand{\QQ}{\mathbb{Q}}
\newcommand{\cL}{\mathcal{L}}
\newcommand{\s}{\subset}
\DeclareMathOperator{\id}{id}
\newtheorem{twierdzenie}{Theorem}
\newtheorem{lemat}[twierdzenie]{Lemma}
\newtheorem{propozycja}[twierdzenie]{Proposition}
\newtheorem{obserwacja}[twierdzenie]{Observation}
\theoremstyle{definition}
\newtheorem{definicja}[twierdzenie]{Definition}
\newtheorem{przyklad}[twierdzenie]{Example}
\theoremstyle{remark}
\newtheorem{uwaga}[twierdzenie]{Remark}
\title[Linear systems over $\PP^1\times\PP^1$]
{Linear systems over $\PP^1\times\PP^1$ with base
points of multiplicity bounded by three}
\date{}
\author{Tomasz Lenarcik}
\begin{document}

\maketitle

\begin{abstract}
We propose a combinatorial method of proving non-specialty
of a linear system of curves with multiple points in general positions.
As an application we obtain a classification of special linear
systems on $\PP^1\times\PP^1$ for which the multiplicities do not
exceed~$3$.
\end{abstract}

\section*{Introduction}

Let $p_1,\ldots,p_r$ denote points in $\PP^1\times\PP^1$ in general
position and let $m_1,\ldots,m_r$ be positive integers. Consider
a blowing up $\pi:X\longrightarrow\PP^1\times\PP^1$ at $p_1,\ldots,p_r$,
and denote the exeptional divisors respectively by $E_1,\ldots,E_r$.
For given $d,e\geq 0$ we define $\cL_{(d,e)}(p_1 m_1,\ldots,p_r m_r)$
to be a complete linear system of the following divisor:
$$
dH_1+eH_2-m_1E_1-\ldots-m_rE_r
$$
where $H_1$ and $H_2$ are pullbacks of classes
of $\PP^1\times\{a_1\}$ and $\{a_2\}\times\PP^1$
respectively, and $a_1,a_2\in\PP^1$ are arbitrary.
It can be understood as a linear space of curves of bidegree $(d,e)$
that vanish at $p_i$ with a multiplicity of at least $m_i$ for $i=1,\ldots,r$.
For a sufficiently general choice of affine coordinates,
each curve from $\cL_{(d,e)}(p_1 m_1,\ldots,p_r m_r)$ can be
uniquely represented (up to a constant factor) by a polynomial
in two variables, namely $X$ and $Y$, that contains monomials
of the form $X^{\alpha}Y^{\beta}$, where $0\leq\alpha\leq d$
and $0\leq\beta\leq e$. Therefore, from linear algebra it follows that
the projective dimension of $\cL_{(d,e)}(p_1 m_1,\ldots,p_r m_r)$
is not less than:
\begin{equation}\label{eq_edim}
\max\{-1,(d+1)(e+1)-\sum_{i}\binom{m_i+1}{2}-1\}.
\end{equation}
The actual dimension, however, does not have to equal
the \textit{expected dimension} \eqref{eq_edim},
as the equations may happen to be
linearly dependent even for a general choice of $p_1,\ldots,p_r$.
In such an instance, we say that the linear system is \textit{special}.
A similar definition can be formulated for special linear
systems over $\PP^2$ (see for example \cite{Dum}).
Further information about linear systems shall be presented in Section 1.

In Section 2 we propose a combinatorial technique of proving
non-specialty of a linear system. Several approaches of this kind
have been recently developed including degenerations techniques
by Ciliberto, Dumitrescu and Miranda \cite{CDM},
application of tropic geomentry by Baur and Draisma \cite{Dra,Bau-Dra},
and reduction method by Dumnicki and Jarnicki \cite{Dum, Dum-Jar}.

Comprehensive research has been done on linear systems over $\PP^2$
due to the Gimigliano-Harbourne-Hirschowitz Conjecture
(see \cite{Hir} for orginal statement or \cite{Dum} for further references).
In \cite{Dum-Jar}, Dumnicki and Jarnicki gave a classification of
all special systems over $\PP^2$ with multiplicities bounded by $11$,
which made it possible to verify that
Gimigliano-Harbourne-Hirschowitz Conjecture
holds for all systems of this type. The case of $\PP^1\times\PP^1$
seems to be less developed in terms of such classification.
As long as all multiplicities of the base points equal $2$
the problem of specialty of a linear system has been widely studied
by many authors for varieties of type
$\PP^{n_1}\times\ldots\times\PP^{n_k}$.
This is due to the fact that special linear systems of this kind
are closely related to defective Segre-Veronese embeddings
(see \cite{CGG1, CGG2} by Catalisano, Geramita, Gimigliano
and \cite{Bau-Dra, CDM}).

As an application of the method introduced in Section 2, we state
Theorem \ref{main1}, which gives a characterization of
special linear systems over $\PP^1\times\PP^1$ with base points
of multiplicity bounded by $3$. While writing this paper
we found that such characterization had already been
known for homogenous systems, i.e. systems with base points of all
the multiplicities equal to 3 (Laface \cite{Laf}). The proof of
Theorem \ref{main1}, which is the main result of this paper,
shall be presented in Section 3.

\section{Linear systems}
Let $\KK$ be an arbitrary field, $\NN=\{0,1,2,\ldots\}$,
$\NN^*=\{1,2,3,\ldots\}$. For any $\delta\in\NN^2$ we
write $\delta=(\delta_1,\delta_2)$.
\begin{definicja}\label{def01}
Any finite and non-empty set $D\s\NN^2$ shall be called a \textit{diagram}.
Let $r\in\NN^*$ and $m_1,\ldots,m_r\in\NN$
(there is a technical reason to consider zero,
see for example Definition \ref{def_small_matrix}).
Let $L$ be a field of rational functions over $K$ in variables
$x_1,y_1,\ldots,x_r,y_r$. Define a \textit{linear system}
spanned over a diagram $D$ with base points of multiplicities
$m_1,\ldots,m_r$ to be an $L$-vector space
$\cL_D(m_1,\ldots,m_r)\s L[X,Y]$ of polynomials
$f=\sum_{\delta\in D} A_{\delta}X^{\delta_1}Y^{\delta_2}$, such that:
\begin{equation}\label{eq01}
\frac{\partial^{\alpha+\beta}f}
{\partial X^{\alpha}\partial Y^{\beta}}(x_i,y_i)=0,\text{ for }
i=1,\ldots,r\text{ and } \alpha+\beta<m_i
\end{equation}
Let $M=M_D(m_1,\ldots,m_r)$ be the matrix of the system of equations
\eqref{eq01}, which are linear with respect to unknown coefficients
$\{A_{\delta}\}_{\delta\in D}$. We say that the system $\cL_D(m_1,\ldots,m_r)$
is special when $M$ is not of the maximal rank. Observe that
the enteries of $M$ belong to the polynomial ring
$\KK[x_1,y_1,\ldots,x_r,y_r]$.
\end{definicja}

Given $d,e\in\NN$ we denote by $\cL_{(d,e)}(m_1,\ldots,m_r)$
a linear system spanned over the diagram
$\{0,1,\ldots,d\}\times\{0,1,\ldots,e\}$.
Let $D$ be a diagram and $m_1,\ldots,m_t\in\NN$,
$q_1,\ldots,q_t\in\NN^*$. We shall use the following notation:
$$
\cL_D(m_1^{\times q_1},\ldots,m_t^{\times q_t}):=
\cL_D(\overbrace{m_1,\ldots,m_1}^{q_1},\ldots,
\overbrace{m_t,\ldots,m_t}^{q_t})
.
$$

\begin{twierdzenie}\label{main1}
Let us assume that $0\leq d\leq e$. A linear system of the form
$\cL_{(d,e)}(1^{\times p},2^{\times q},3^{\times r})$
is special if and only if one of the following conditions hold:
\begin{itemize}
 \item[$(0)$] $d=0$, $p+2q+3r\leq e$ and $q\geq 1$ or $r\geq 1$
 \item[$(1)$] $d=1$, $p+3q+5r\leq 2e+1$ and $r\geq 1$
 \item[$(2)$] $d=2$, $p=0$, $e=q+2r-1$ and $2\nmid q+r$
 \item[$(3)$] $d=3$, and for some $n\geq 1$:
 \begin{itemize}
  \item[$(3.1)$] $e=3n$, $p=q=0$ and $r=2n+1$
  \item[$(3.2)$] $e=3n$, $p\leq 1$, $q=1$ and $r=2n$
  \item[$(3.3)$] $e=3n+1$, $p\leq 2$, $q=0$ and $r=2n+1$
  \item[$(3.4)$] $e=3n+2$, $p=0$, $q=2$ and $r=2n+1$
 \end{itemize}
 \item[$(4)$] $d=4$, $e=5$, $p=q=0$ and $r=5$
\end{itemize}
\end{twierdzenie}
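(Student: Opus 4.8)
The plan is to establish the two implications of Theorem \ref{main1} separately. Throughout we keep the standing hypothesis $d \le e$, which costs nothing since interchanging the two factors of $\PP^1\times\PP^1$ exchanges $d$ and $e$ and preserves specialty. The unifying principle is that the specialty of $\cL_{(d,e)}(1^{\times p},2^{\times q},3^{\times r})$ is governed by how the jet of each base point --- the triangle of conditions $\{(\alpha,\beta):\alpha+\beta<m_i\}$ imposed by \eqref{eq01} --- fits inside the diagram $\{0,\ldots,d\}\times\{0,\ldots,e\}$. When $d$ is small the diagram is too thin in the $X$-direction to accommodate these triangles, so some conditions become redundant; the combinatorial criterion of Section 2, conversely, certifies that $M_D(m_1,\ldots,m_r)$ has maximal rank once the jets can be separated inside the diagram. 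I would therefore prove sufficiency by exhibiting the redundancy directly, and necessity by invoking the criterion of Section 2.

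For sufficiency the model is case $(0)$. When $d=0$ a polynomial $f=\sum_\beta A_\beta Y^\beta$ is independent of $X$, so every condition in \eqref{eq01} carrying a $\partial_X$ is vacuous and a point of multiplicity $m_i$ imposes only $m_i$ genuine conditions instead of $\binom{m_i+1}{2}$; the true projective dimension is $\max\{-1,\,e-(p+2q+3r)\}$, which strictly exceeds \eqref{eq_edim} precisely when $p+2q+3r\le e$ and $q\ge 1$ or $r\ge 1$. The same mechanism drives case $(1)$: for $d=1$ one has $\partial_X^2 f\equiv 0$, so a triple point wastes exactly one condition (double and simple points waste none), which is why $r\ge 1$ is forced and why the effective count $p+3q+5r$ appears. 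For $d\ge 2$ a single triangle of a triple point already fits, so the special systems $(2)$, $(3)$ and $(4)$ are threshold phenomena of a global nature: in each of them the expected dimension \eqref{eq_edim} is $-1$ or a small nonnegative number, and specialty is confirmed by producing one unexpected curve (equivalently, by a direct rank drop of $M$) --- for instance the $(4,5)$-curve through five general triple points in $(4)$. The parity constraint $2\nmid q+r$ in $(2)$ and the residues of $e$ modulo $3$ in $(3)$ single out exactly the configurations for which such a curve exists, and for the homogeneous sub-case $p=q=0$ this recovers Laface's list \cite{Laf}.

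For necessity I must show that every system off the list is non-special, and here the engine is the criterion of Section 2. I would argue by induction, stripping the system down by one ruling at a time: fixing $R=\PP^1\times\{b\}$, the restriction sequence relates $\cL_{(d,e)}$ to its trace on $R\cong\PP^1$, a transparent univariate system of degree $d$, and to the residual $\cL_{(d,e-1)}$ with the multiplicities along $R$ lowered. Distributing the base points between $R$ and its complement so that both the trace and the residual are non-special lets the inductive hypothesis dispose of the residual, while the trace is handled by one-variable interpolation; feeding the resulting separation of jets into the combinatorial criterion yields maximal rank of $M$. The induction bottoms out at a finite list of small $(d,e)$ (small because $d\le e$), each checked directly against the criterion.

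The bound $d\le 4$ then emerges from the same bookkeeping: a jet of multiplicity at most $3$ occupies a block of width at most $3$, so once $d\ge 5$ --- and hence $e\ge d\ge 5$ --- the diagram is wide and tall enough that the jets can always be separated and no global obstruction survives, so every such system is non-special and absent from the list. I expect the real difficulty to be concentrated in the threshold cases for $2\le d\le 4$. There the induction crosses the boundary between the expected-empty and expected-positive regimes, the residual obtained after removing a ruling can itself land on one of the special configurations, and the sporadic families $(3.1)$--$(3.4)$ and $(4)$ must be excised by hand and matched exactly against the criterion. Arranging the ruling-by-ruling reduction so that it stays off these special loci --- equivalently, verifying that the combinatorial separation succeeds precisely off the listed equalities --- is the main obstacle.
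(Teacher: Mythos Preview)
Your treatment of sufficiency is in line with the paper: cases $(0)$ and $(1)$ are indeed handled by the vanishing of high $X$-derivatives, and cases $(2)$--$(4)$ by exhibiting an unexpected curve. The paper makes the latter explicit via products of simpler systems (Observation~\ref{obs_aux2}), for instance $(\cL_{(1,k)}(1^{\times q+r}))^2\cdot\cL_{(0,r)}(0^{\times q},1^{\times r})\subset\cL_{(2,q+2r-1)}(2^{\times q},3^{\times r})$ for case~$(2)$, but your sketch captures the mechanism.

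For necessity, however, there is a genuine confusion. You describe a Horace-style induction on $e$ via the restriction sequence to a ruling $R=\PP^1\times\{b\}$, specialising some base points onto $R$ and handling trace and residual separately. That is a legitimate technique, but it is \emph{not} ``the criterion of Section~2''. Section~2 says nothing about exact sequences or rulings; its content (Theorems~\ref{prop08} and~\ref{tw13}) is that $M_D(m_1,\ldots,m_r)$ has maximal rank whenever the diagram $D$ admits a \emph{unique tiling} by stable diagrams, uniqueness being certified by the partial-order condition of Observation~\ref{obs12}. The paper's proof of necessity consists entirely of manufacturing such tilings: the rectangle $(d+1)\times(e+1)$ is covered by explicit $3$- and $6$-element stable pieces (Figure~1), with the construction reduced to a finite table of base cases by peeling off rectangular strips (Lemmas~\ref{lem_aux1}--\ref{lem_aux5} and Figures~\ref{fig:6x}--\ref{fig:6_schemes}). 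No exact sequence, semicontinuity, or specialisation of points is used anywhere.

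So your necessity argument either stands on its own as a different method --- in which case the phrase ``feeding the resulting separation of jets into the combinatorial criterion'' is a red herring, since the restriction sequence already delivers maximal rank without any tiling --- or it is meant to invoke Section~2, in which case you have not supplied what Section~2 actually demands, namely the tilings themselves. As written, the proposal conflates two distinct mechanisms and leaves the substantive work (either the delicate bookkeeping of a Horace induction through the special loci $(2)$--$(4)$, or the explicit tiling constructions that occupy the paper's figures) entirely unperformed.
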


The proof of Theorem \ref{main1} shall be presented in Section 3.
Throughout the proof we will take advantage of
a relation between geometrical properties of diagram $D$
and the rank of matrix $M_D(m_1,\ldots,m_r)$ (see Theorem \ref{prop08}).
For linear systems which contain only one base point, this relation
can be expressed as follows (the proof can be found in \cite{Dum}):
\begin{propozycja}\label{prop02}
Let $D=\{\delta_1,\ldots,\delta_s\}$ be a diagram, and
$\#D=s=\binom{m_1+1}{2}$. Then $\det M_D(m_1)=0$
(i.e. sytem $\cL_D(m_1)$ is special) if and only if
there extists a curve of degree $m_1-1$ that contains
all of the points of $D$.
Moreover, if $\det M_D(m_1)\neq 0$ then the following equation holds:
$$
\det M_D(m)=A\cdot x_1^{\delta_{1,1}+\ldots+\delta_{s,1}-s}
y_1^{\delta_{1,2}+\ldots+\delta_{s,2}-s},
\quad A\in\KK\setminus0
$$
\end{propozycja}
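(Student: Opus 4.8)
The plan is to write down the entries of $M:=M_D(m_1)$ explicitly and to extract both assertions from the monomial shape of $\det M$. For the single base point $(x_1,y_1)$, the instance of \eqref{eq01} attached to a pair $(\alpha,\beta)$ with $\alpha+\beta<m_1$ yields the row of $M$ whose entry in the column indexed by $\delta_j$ is $\frac{\delta_{j,1}!}{(\delta_{j,1}-\alpha)!}\,\frac{\delta_{j,2}!}{(\delta_{j,2}-\beta)!}\,x_1^{\delta_{j,1}-\alpha}y_1^{\delta_{j,2}-\beta}$ (and $0$ if $\alpha>\delta_{j,1}$ or $\beta>\delta_{j,2}$). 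Since $\#D=s=\binom{m_1+1}{2}$ is exactly the number of admissible pairs $(\alpha,\beta)$, the matrix $M$ is square. I will carry out the argument assuming the arising factorials are invertible in $\KK$ (e.g.\ $\operatorname{char}\KK=0$); in positive characteristic one should read the derivatives in \eqref{eq01} as Hasse derivatives, which only removes these scalar factors.

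First I would prove the ``moreover'' clause. Each nonzero entry above is a monomial in $(x_1,y_1)$ of $x_1$-degree $\delta_{j,1}-\alpha$ and $y_1$-degree $\delta_{j,2}-\beta$. In the Leibniz expansion of $\det M$, any bijection matching rows to columns uses every column exactly once, so the total $x_1$-degree of the corresponding term equals $\sum_j\delta_{j,1}-\sum_{\alpha+\beta<m_1}\alpha$, which does not depend on the bijection; the same holds for the $y_1$-degree. Thus $\det M$ is homogeneous separately in $x_1$ and in $y_1$, which forces it to be a constant $A\in\KK$ times a single monomial $x_1^{k_1}y_1^{k_2}$, with $k_1,k_2$ read off from these two degree counts (each being the coordinate weight of $D$ minus a constant depending only on $m_1$). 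This is the displayed formula, and plainly $\det M\neq 0\iff A\neq0$.

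Next I would identify the vanishing of $A$. Specializing $x_1=y_1=1$ gives $A=\det M|_{x_1=y_1=1}$; pulling the scalar $\alpha!\beta!$ out of each row turns this matrix, up to the nonzero factor $\prod_{\alpha+\beta<m_1}\alpha!\beta!$, into the matrix $N$ with entries $\binom{\delta_{j,1}}{\alpha}\binom{\delta_{j,2}}{\beta}$. I then compare $N$ with the interpolation matrix $V$ whose entry in row $(a,b)$, $a+b<m_1$, and column $j$ is $\delta_{j,1}^{\,a}\delta_{j,2}^{\,b}$. Because $\binom{t}{\alpha}$ is a polynomial of degree $\alpha$ in $t$ with leading coefficient $1/\alpha!$ and the index set $\{\alpha+\beta<m_1\}$ is closed downward in the coordinatewise order, the rows of $N$ are expressed through the rows of $V$ by a triangular change of basis with diagonal entries $1/(\alpha!\beta!)$. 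Hence $\det N$ and $\det V$ are proportional by a nonzero constant, so $A=0\iff\det V=0$.

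Finally, $\det V=0$ says precisely that the linear map sending a polynomial of degree $\le m_1-1$ (a space of dimension $s$) to its values at the $s$ points $\delta_1,\dots,\delta_s$ has a nontrivial kernel, i.e.\ that some nonzero such polynomial vanishes at every point of $D$ --- a curve of degree $m_1-1$ passing through $D$. Chaining the equivalences $\det M=0\iff A=0\iff\det V=0\iff(\text{curve exists})$ completes the proof. I expect the one delicate step to be the triangularity claim: one must verify that the transition from the power functions $t\mapsto t^a$ to the binomial functions $t\mapsto\binom{t}{\alpha}$ is unitriangular up to the explicit factorial scalars and that the staircase $\{\alpha+\beta<m_1\}$ is order-closed, so that $\det N$ and $\det V$ indeed differ by a nonzero factor; once this is in place, the bihomogeneity and interpolation steps are routine linear algebra.
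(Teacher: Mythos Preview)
The paper does not actually prove this proposition; it refers the reader to \cite{Dum}. Your argument is correct and self-contained: separate homogeneity of the entries in $x_1$ and in $y_1$ forces $\det M$ to be a single monomial, and the triangular change of basis from the power functions $t\mapsto t^a$ to the binomial functions $t\mapsto\binom{t}{\alpha}$ over the downward-closed staircase $\{\alpha+\beta<m_1\}$ cleanly reduces the vanishing of the scalar $A$ to the vanishing of the interpolation determinant $\det V$, hence to the existence of a curve of degree at most $m_1-1$ through $D$. The characteristic caveat you raise is also appropriate, since the paper allows $\KK$ to be arbitrary.

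One small point: you assert ``This is the displayed formula'' without actually computing the constant subtracted from $\sum_j\delta_{j,1}$. Your own degree count gives this constant as $\sum_{\alpha+\beta<m_1}\alpha=\binom{m_1+1}{3}$, not $s=\binom{m_1+1}{2}$ as printed; already for $m_1=1$ the printed exponent $\delta_{1,1}-1$ is visibly wrong, since the $1\times1$ determinant is $x_1^{\delta_{1,1}}y_1^{\delta_{1,2}}$. This is harmless for the sequel---the only use made of the formula, in the proof of Theorem~\ref{prop08}, is that the exponent depends on $D$ only through $c(D)$ and $\#D$, which your argument does establish---but you should flag the discrepancy rather than claim agreement.
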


\begin{definicja}\label{def03}
Let $D$ be a diagram and $\#D=\binom{m+1}{2}$.
We say that $D$ is \textit{non-special (special)} of degree $m$
if $\det M_D(m)\neq 0(=0)$.
\end{definicja}


\section{Unique tilings}

We introduce the notion of a \textit{unique tiling} and state
Theorem \ref{prop08} and Theorem \ref{tw13}. Thanks to these theorems
we will be able to prove non-specialty of linear systems in terms of
finding a solution for some specific problem of exact covering.

\begin{definicja}[center of mass, unique tiling]\label{def04}
Given a diagram $D$ we define its \textit{center of mass}
as follows:
\begin{equation}
\QQ^2\ni(c_1(D),c_2(D))=c(D):=\frac{1}{\#D}\sum_{d\in D}d
\end{equation}

We say that a finite and non-empty set of diagrams $T$
is \textit{a tiling}, if any two elements of $T$ are disjoint.
Let $T$ and $T'$ be tilings, and consider a mapping
$f:T\longrightarrow T'$. We say that $T$ and $T'$ are congruent
through $f$, and denote it by $f:T\simeq T'$,
if the following conditions hold:
\begin{itemize}
 \item[$(i)$] $f$ is one to one, and $\#f(D)=\#D$, $c(f(D))=c(D)$
  for any $D\in T$ 
 \item[$(ii)$] $\bigcup T=\bigcup T'$
\end{itemize}
A tiling $T$ that contains only non-special diagrams is said
to be \textit{unique}, if $f:T\simeq T'$ implies that either
$T=T'$ and $f=\id_U$, or $T'$ contains a special diagram.
\end{definicja}

\begin{definicja}\label{def_small_matrix}
Given a diagram $D$ and numbers $m,r,i>0$, where $i\leq r$,
we define:
$$
M_D^{(i)}(m):=M_D(0^{\times i-1},m,0^{\times {r-i}})
$$
Observe, that the entries of the matrix $M_D^{(i)}(m)$
depend only on variables $x_i,y_i$ (see Definition \ref{def01}).
\end{definicja}

The following theorem states a relation between
uniqueness of a particular tiling and non-specialty
of a linear system.
\begin{twierdzenie}\label{prop08}
Let $T=\{D_1,\ldots,D_r\}$ be a unique tiling such that
$\#D_i=\binom{m_i+1}{2}$. If $D$ is a diagram for which
one of the following conditions holds:
\begin{itemize}
 \item[$(i)$] $D_1\cup\ldots\cup D_r\s D$
 \item[$(ii)$] $D\s D_1\cup\ldots\cup D_r$
\end{itemize}
then $\cL_D(m_1,\ldots,m_r)$ is non-special.
\begin{proof}
Let us denote $M=M_D(m_1,\ldots,m_r)$. We shall group
the rows of $M$ into submatricies $M_1,\ldots,M_r$
such that $M_i$ coresponds to $\binom{m_i+1}{2}$ equations
which depend on variables $x_i,y_i$ (see Definition \ref{def01}).
The columns of $M$ are indexed by the elements of $D$ in a natural way
(each element of $D$ coresponds to a monomial).

It is sufficient to prove the theorem under the assumption of $(i)$.
Let $D'=D_1\cup\ldots\cup D_r$. From $(i)$ it follows that the
minor $M(D')$, i.e. the submatrix of $M$ consisting of columns indexed
by the elements of $D'$, is of the maximal rank.
By the Laplace decomposition we get:
\begin{multline}\label{eq02}
\det M(D')
=\sum \varepsilon(D_1',\ldots,D_r')
\det M_1(D_1')\cdots\det M_r(D_r') \\
=\sum \varepsilon(D_1',\ldots,D_r')
\det M^{(1)}_{D_1'}(m_1)\cdots\det M^{(r)}_{D_r'}(m_r)
\end{multline}
where $\varepsilon(D_1',\ldots,D_r')\in\{-1,1\}$
and the summation runs over all partitions $\{D_1',\ldots,D_r'\}$
of $D'$ for witch $\#D_i'=\binom{m_i+1}{2}$.

Let $s_i=\#D_i'$. From Proposition \ref{prop02} it follows
that each non-zero component of the sum \eqref{eq02} is
non-zero if and only if, $D_1',\ldots,D_r'$ are non-special.
In which case this component is a monomial of the form:
\begin{multline}\label{eq03}
\det M^{(1)}_{D_1'}(m_1)\cdots\det M^{(r)}_{D_r'}(m_r)\\
=A\cdot
x_1^{c_1(D_1')s_1-s_1}
y_1^{c_2(D_1')s_1-s_1}
\cdots
x_r^{c_1(D_r')s_r-s_r}
y_r^{c_2(D_r')s_r-s_r}
\end{multline}
where $A\in\KK$. As the tiling $T$ is unique, the non-zero monomial:
$$
\det M^{(1)}_{D_1}(m_1)\cdots\det M^{(r)}_{D_r}(m_r)
$$
turnes up as a component of \eqref{eq02} only once.
Since it can not be reduced we get $\det M(D')\neq 0$.
\end{proof}
\end{twierdzenie}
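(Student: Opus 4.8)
The plan is to prove that the matrix $M=M_D(m_1,\ldots,m_r)$ attains maximal rank, and to organize the work so that both cases collapse to a single determinantal computation. First I would fix the block structure of $M$: partition its rows into blocks $M_1,\ldots,M_r$, where $M_i$ collects the $\binom{m_i+1}{2}$ equations \eqref{eq01} attached to the $i$-th base point, so that the entries of $M_i$ involve only the variables $x_i,y_i$; the columns are indexed by the monomials $\delta\in D$. The total number of rows is $\sum_i\binom{m_i+1}{2}=\#D'$, where I abbreviate $D'=D_1\cup\ldots\cup D_r$.

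I would dispatch case $(ii)$ by reducing it to $(i)$. Applying the conclusion under hypothesis $(i)$ to the diagram $D'$ itself (which trivially satisfies $D'\s D'$) shows that $M_{D'}$ has maximal rank; since $\#D'$ equals the number of rows, $M_{D'}$ is a square nonsingular matrix. When $D\s D'$, the matrix $M_D$ is exactly the column-submatrix of $M_{D'}$ indexed by $D$, and columns of a nonsingular matrix are linearly independent, so $M_D$ has full column rank $\#D$, which is maximal since $\#D\leq\#D'$. Thus it suffices to treat $(i)$.

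For $(i)$ I would exhibit a nonvanishing maximal minor of $M$. As $D'\s D$, the submatrix $M(D')$ built from the columns indexed by $D'$ is square of size $\#D'$, and I claim $\det M(D')\neq0$ as a polynomial in $\KK[x_1,y_1,\ldots,x_r,y_r]$. Expanding by the generalized Laplace rule along the row blocks $M_1,\ldots,M_r$ yields a signed sum over all ways of distributing the columns, i.e. over ordered families $(D_1',\ldots,D_r')$ partitioning $D'$ with $\#D_i'=\binom{m_i+1}{2}$, each term being $\pm\prod_i\det M^{(i)}_{D_i'}(m_i)$ in the notation of Definition \ref{def_small_matrix}. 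Here Proposition \ref{prop02} does the heavy lifting: each factor vanishes unless $D_i'$ is non-special of degree $m_i$, and when it is non-special the factor is a single monomial in $x_i,y_i$ whose exponents are governed by the center of mass $c(D_i')$. Consequently every nonzero term is a monomial, and two such terms coincide precisely when the corresponding families have equal block sizes and equal block centers of mass.

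The main obstacle is the non-cancellation argument. The distinguished term coming from the original splitting $D_i'=D_i$ is nonzero because every $D_i$ is non-special by the standing assumption on tilings. I would then argue that no other term produces the same monomial: if an ordered family $(D_1',\ldots,D_r')$ yields a nonzero term with the same monomial, then $\#D_i'=\#D_i$ and $c(D_i')=c(D_i)$ for all $i$, so setting $f(D_i)=D_i'$ gives a congruence $f:T\simeq T'$ with $T'=\{D_1',\ldots,D_r'\}$. Since the term is nonzero, this $T'$ consists of non-special diagrams, whence the uniqueness of $T$ (Definition \ref{def04}) forces $T'=T$ and $f=\id$, that is $D_i'=D_i$ for every $i$. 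Hence the distinguished monomial occurs exactly once and cannot be cancelled, giving $\det M(D')\neq0$ and maximal rank. The delicate point throughout is the translation between monomial coincidences and tiling congruences, which is exactly the place where the uniqueness hypothesis is indispensable.
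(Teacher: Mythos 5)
Your proof is correct and takes essentially the same route as the paper's: the generalized Laplace expansion of $\det M(D')$ over block partitions, Proposition \ref{prop02} to turn each nonzero term into a monomial with exponents determined by the centers of mass, and the uniqueness of the tiling $T$ to show the distinguished monomial $\prod_i\det M^{(i)}_{D_i}(m_i)$ occurs exactly once and cannot cancel. Your only addition is spelling out the reduction of case $(ii)$ to case $(i)$ via the square nonsingular matrix $M_{D'}$, which the paper merely asserts.
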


A weak point of Theorem \ref{prop08} is its assumption
about the uniqueness of the tiling. Verifying whether
a given tiling is unique or not may seem to be even more
challanging than evaluating ''by hand'' the rank of the matrix
related to a linear system. This problem is partialy addressed
by Theorem \ref{tw13}, which aims at giving some conditions
that are sufficient for the uniqueness of a tiling.
Before we state this result, we need to introduce some
necessary definitions.

\begin{definicja}[innertial momentum, stable diagram]\label{def09}
Given a diagram $D$ we define its \textit{boundary distributions} and
\textit{innertial momentum} as follows:
\begin{gather*}
\varphi_1(D):\NN\ni\alpha\mapsto\#(D\cap\{\alpha\}\times\NN)\in\NN\\
\varphi_2(D):\NN\ni\beta\mapsto\#(D\cap\NN\times\{\beta\})\in\NN\\
i(D)=\sum_{\delta\in D}\Vert \delta-c(D)\Vert^2
\end{gather*}
We say that a diagram $D$ is \textit{a stable diagram} if it is
non-special, its vertical and horizontal sections are segments,
and for any diagram $D'$ the equations $\#D=\#D'$ and
$c(D)=c(D')$ imply that at least one of the following conditions holds:
\begin{itemize}
 \item[$(i)$] $D'$ is special
 \item[$(ii)$] $i(D')>i(D)$
 \item[$(iii)$] $i(D')=i(D)$, $\varphi_1(D')=\varphi_1(D)$
     and $\varphi_2(D')=\varphi_2(D)$
\end{itemize}
\begin{przyklad}\label{ex_sing}
Any $1-$diagram, i.e. a diagram which is a singleton,
is a stable diagram.
\end{przyklad}
We define a relation on the set of all diagrams:
\begin{multline*}
D\preceq D'\iff 
\text{there exists }\delta=(\delta_1,\delta_2)\in D\text{ and }
\delta'=(\delta_1',\delta_2')\in D',\\
\text{such that }\delta_1=\delta_1'\text{ and }\delta_2\preceq \delta_2'
\end{multline*}
When restricted to a particular tiling, this relation may be
extended to a partial ordering. The following observation
contains further details. We omit the simple proof.
\begin{obserwacja}\label{obs12}
Suppose that the projection on the first coordinate
of any diagram from a tiling $T$ is a segment.
Then the following conditions are equivalent:
\begin{itemize}
 \item[$(i)$] relation $\preceq$ can be extended to a partial ordering on $T$
 \item[$(ii)$] for any $D,D'\in T$ the relations $D\preceq D'$ and
 $D'\preceq D$ imply $D=D'$.
\end{itemize}
\end{obserwacja}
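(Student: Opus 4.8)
The plan is to prove the two implications separately; the forward one is immediate and the reverse carries all the content. First note that $\preceq$ is reflexive on $T$ (take $\delta=\delta'$ in the definition). If $\preceq$ extends to a partial order $\le$ on $T$, then $D\preceq D'$ and $D'\preceq D$ give $D\le D'$ and $D'\le D$, whence $D=D'$ by antisymmetry; this is $(i)\Rightarrow(ii)$. For the converse I would invoke the standard fact that a reflexive relation on a finite set extends to a partial order if and only if its transitive closure is antisymmetric, i.e. if and only if there is no cycle $D_0\preceq D_1\preceq\cdots\preceq D_n=D_0$ involving two distinct diagrams; in that case the transitive closure of $\preceq$ is already the desired order. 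So, assuming $(ii)$, the whole task is to rule out every such nontrivial cycle.

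The geometric input I would isolate first is a comparability lemma: whenever two diagrams $D,D'\in T$ share a column $\gamma$ (a common first coordinate), at least one of $D\preceq D'$, $D'\preceq D$ holds. Indeed, writing $a\le A$ and $b\le B$ for the least and greatest heights of $D$ and of $D'$ in column $\gamma$, if both relations failed we would have $a>B$ and $b>A$, forcing $a>b$ and $b>a$. Under $(ii)$, exactly one of the two holds for distinct $D,D'$, and I would record that $D\prec D'$ (by which I mean $D\preceq D'$ with $D\neq D'$) then forces \emph{every} height of $D$ to lie strictly below \emph{every} height of $D'$ in each shared column, since this is precisely the negation of $D'\preceq D$. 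Diagrams sharing no column are $\preceq$-incomparable.

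Now I would argue by a shortest-cycle analysis. By $(ii)$ there is no $2$-cycle, so a shortest nontrivial cycle $D_0\prec D_1\prec\cdots\prec D_{k-1}\prec D_0$ has $k\ge 3$, with all $D_i$ distinct and all consecutive pairs sharing a column. If some \emph{non-adjacent} $D_i,D_j$ also shared a column, the comparability lemma would give a $\preceq$-relation between them in one direction or the other, and splicing that relation into the cycle would yield a strictly shorter nontrivial cycle, contradicting minimality; hence in the shortest cycle non-adjacent diagrams have disjoint column-segments while consecutive ones overlap. For $k\ge 4$ this is impossible: picking the diagram $D_m$ whose column-segment has the least right endpoint $t$, both neighbours $D_{m\pm1}$ overlap it and have right endpoint $\ge t$, so both contain the column $t$; yet $D_{m-1}$ and $D_{m+1}$ are non-adjacent when $k\ge 4$, contradicting disjointness. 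Thus $k=3$, where the three column-segments pairwise intersect; by the one-dimensional Helly property they share a common column $\gamma$, and in $\gamma$ the strict relations force $\max_\gamma D_0<\min_\gamma D_1\le\max_\gamma D_1<\min_\gamma D_2\le\max_\gamma D_2<\min_\gamma D_0$, an impossible cyclic chain of integers. Hence no nontrivial cycle exists and $\preceq$ extends to a partial order.

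The main obstacle is exactly the reverse implication for cycles of length $\ge 3$: condition $(ii)$ forbids only $2$-cycles, and $\preceq$ is genuinely non-transitive (a diagram bridging two column-disjoint diagrams exhibits this), so acyclicity cannot be read off directly from $(ii)$. The hypothesis that the first-coordinate projections are segments is what rescues the argument, through the interval/Helly reasoning above; I expect the shortest-cycle shortcut together with the minimal-right-endpoint step to be the delicate bookkeeping, and I would take care to check that each splice really produces a shorter cycle with at least three vertices so that minimality (rather than $(ii)$) is contradicted.
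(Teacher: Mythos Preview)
Your argument is correct. The paper explicitly omits the proof of this observation (``We omit the simple proof''), so there is no authorial argument to compare against; your write-up supplies exactly the kind of elementary verification the author presumably had in mind. Two small confirmations worth recording: first, your splicing step always produces a cycle of length at least $3$, since a non-adjacent pair in a $k$-cycle with $k\ge 4$ has cyclic distance between $2$ and $k-2$ in both directions, so the shortened cycle has length in $[3,k-1]$ and minimality (not merely $(ii)$) is indeed what is contradicted; second, your minimal-right-endpoint step genuinely uses the segment hypothesis, as does the Helly step for $k=3$, so the hypothesis is not decorative. The only cosmetic point is that even if a splice \emph{had} yielded a $2$-cycle, that would contradict $(ii)$ directly, so your concluding caution, while careful, is not strictly needed.
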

\end{definicja}

\begin{twierdzenie}\label{tw13}
Suppose that a tiling $T$ consists of stable diagrams. If the relation
$\preceq$ can be extended to a partial ordering on $T$ then $T$ is unique.
\end{twierdzenie}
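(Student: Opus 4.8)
The plan is to fix a congruence $f:T\simeq T'$ such that $T'$ contains no special diagram, and to prove that this forces $T=T'$ and $f=\id$; by Definition~\ref{def04} this is exactly the assertion that $T$ is unique. Write $P=\bigcup T=\bigcup T'$. By condition $(i)$ of the definition of congruence every $D\in T$ satisfies $\#f(D)=\#D$ and $c(f(D))=c(D)$, and by hypothesis every $f(D)$ is non-special, so case $(i)$ of Definition~\ref{def09} never applies to the pair $D,f(D)$.

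The first step is to upgrade this to the full strength of condition $(iii)$ of Definition~\ref{def09} for every $D$. For a fixed point $p\in\QQ^2$ and any diagram $D$ the elementary identity
\begin{equation*}
\sum_{\delta\in D}\Vert\delta-p\Vert^2=i(D)+\#D\cdot\Vert c(D)-p\Vert^2
\end{equation*}
holds, the cross term vanishing by the definition of $c(D)$. Summing over $D\in T$ and over $D'\in T'$ and using $P=\bigcup T=\bigcup T'$, the two left-hand sides agree; since $f$ preserves $\#$ and $c$, the terms $\#D\cdot\Vert c(D)-p\Vert^2$ cancel in pairs, leaving $\sum_{D\in T}i(D)=\sum_{D\in T}i(f(D))$. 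As $f(D)$ is non-special with the same cardinality and center of mass as $D$, stability of $D$ gives $i(f(D))\ge i(D)$. Equality of the two sums then forces $i(f(D))=i(D)$ for every $D$, which excludes the strict case $(ii)$ and leaves case $(iii)$: for all $D\in T$ we have $\varphi_1(f(D))=\varphi_1(D)$ and $\varphi_2(f(D))=\varphi_2(D)$.

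Next I would use the order to reconstruct $T$ from below. The key geometric observation is that a $\preceq$-minimal element $D_0$ of $T$ is \emph{bottom-packed}: in each column $\alpha$ with $\varphi_1(D_0)(\alpha)>0$ it occupies the lowest $\varphi_1(D_0)(\alpha)$ cells of $P\cap(\{\alpha\}\times\NN)$. Indeed, the vertical sections of $D_0$ are segments, so if some cell of $P$ lay below $D_0$ in a shared column it would belong to another diagram $D$ possessing a point not above a point of $D_0$; this gives $D\preceq D_0$ and contradicts minimality. Fixing a linear extension $\ell$ of $(T,\preceq)$, which exists because $\preceq$ extends to a partial ordering by hypothesis, one checks that $\ell^{-1}(j)$ is minimal in $\{D:\ell(D)\ge j\}$, hence bottom-packed in the region left after deleting $\ell^{-1}(1),\dots,\ell^{-1}(j-1)$. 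Thus $T$ is completely determined by $\{\varphi_1(D)\}_{D\in T}$ together with $\ell$: peel the diagrams off in the order $\ell$, each taking the lowest available cells of $P$ in its columns.

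Finally I would transfer this reconstruction across $f$ by induction on $j$. Assuming $f(\ell^{-1}(i))=\ell^{-1}(i)$ for $i<j$, the images $\{f(\ell^{-1}(i))\}_{i\ge j}$ tile the same leftover region $R$ as $\{\ell^{-1}(i)\}_{i\ge j}$, and $D_{(j)}:=\ell^{-1}(j)$ is bottom-packed in $R$. Since $\varphi_1(f(D_{(j)}))=\varphi_1(D_{(j)})$ and $f(D_{(j)})\subseteq R$, in each column $f(D_{(j)})$ selects exactly as many cells as $D_{(j)}$; because the bottom-packed choice uniquely minimizes the sum of the row-coordinates column by column, we obtain $\sum_{\delta\in f(D_{(j)})}\delta_2\ge\sum_{\delta\in D_{(j)}}\delta_2$, with equality precisely when $f(D_{(j)})=D_{(j)}$. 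The main obstacle is to promote this inequality to an equality: a priori a later image $f(\ell^{-1}(i))$, $i>j$, which is \emph{not} assumed to have segment sections, could dip below $D_{(j)}$ and compensate within the conserved total $\sum_{\delta\in R}\delta_2$. To rule this out I would take a minimal counterexample in $j$ and exploit the partial order on $T$: a bottom cell of a column of $R$ lying in some $f(\ell^{-1}(i))$ with $i>j$ produces, through $f$ and the matched marginals $\varphi_1,\varphi_2$, two diagrams whose vertical positions in that column are inverted relative to the order imposed on $T$ by bottom-packing, contradicting that $\preceq$ extends to a partial ordering. Making this inversion argument precise — reconciling the rigid order on the stable diagrams of $T$ with the a priori unconstrained shapes of the diagrams of $T'$ — is the heart of the proof; once it is settled the induction closes and yields $f=\id$ and $T=T'$.
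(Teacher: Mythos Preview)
Your approach is essentially the paper's: the inertial-momentum identity to force case $(iii)$ of stability, the bottom-packed description of a $\preceq$-minimal tile, and induction peeling off one tile at a time. However, you have manufactured an obstacle that is not there.

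You obtain the inequality
\[
\sum_{\delta\in f(D_{(j)})}\delta_2 \;\ge\; \sum_{\delta\in D_{(j)}}\delta_2,
\]
with equality precisely when $f(D_{(j)})=D_{(j)}$, and then worry about how to upgrade it to an equality via a delicate ``inversion'' argument. But the equality is immediate from the data you already have: by the congruence, $c_2(f(D_{(j)}))=c_2(D_{(j)})$ and $\#f(D_{(j)})=\#D_{(j)}$, hence
\[
\sum_{\delta\in f(D_{(j)})}\delta_2=\#f(D_{(j)})\cdot c_2(f(D_{(j)}))=\#D_{(j)}\cdot c_2(D_{(j)})=\sum_{\delta\in D_{(j)}}\delta_2.
\]
This is exactly how the paper closes the argument (its inequality~\eqref{eq06} and the line following it). No compensation between tiles needs to be ruled out, and no inversion argument is required; the induction closes immediately. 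So your proposal is correct once you drop the last paragraph and replace it by this one-line use of the preserved second coordinate of the center of mass.
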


\begin{uwaga}
Whether a given diagram is special or not, can be usually verified
with the help of B\'ezout Theorem due to Proposition \ref{prop02}.
Thanks to Observation \ref{obs12}$(ii)$ it is very easy to state,
for a given tiling, that $\preceq$ can be extended to a partial ordering.
Meanwhile, determining if a diagram is stable or not seems to be a more
complex task. As the condition of being a stable diagram is an indeterminant
of an isometry, the problem of finding all stable diagrams of a bounded 
degree leads to a finite number of cases, and so it can be solved
through effective, but harmfull, computation.

The figure below represents all stable diagrams, up to an isometry,
consisting of $3$ or $6$ elements. Every diagram that is isometric
to one of the following shall be called either \textit{3-diagram}
or \textit{6-diagram}.
\begin{figure}[h]
\epsfbox{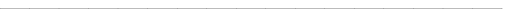}
\caption{Examples of $3-$diagrams and $6-$diagrams}
\end{figure}
\end{uwaga}

\begin{proof}[Proof of Theorem \ref{tw13}]
We follow by induction on the number of elements of $T$.
It is clear that a tiling consisting of one diagram is always unique.

Let us denote $T=\{D_1,\ldots,D_s\}$ and $D=\bigcup T$.
Suppose that $T'=\{D_1',\ldots,D_s'\}$ consists of non-special
diagrams and $f:T\simeq T'$, where $f:D_j\mapsto D_j'$ for
$i=1,\ldots,s$. Our goal is to prove that $D_j=D_j'$ for any $j$.
From the elementary properties of innertial momentum we get:
\begin{equation}\label{eq04}
\sum_{j=1}^s\#D_j\Vert c(D_j)\Vert^2+i(D_j)
=\sum_{d\in D}\Vert d\Vert^2
=\sum_{j=1}^s\#D_j'\Vert c(D_j')\Vert^2+i(D_j')
\end{equation}
As $D_j$ are stable diagrams and $D_j'$ are non-special,
it follows that $i(D_j)\leq i(D_j')$.
Due to \eqref{eq04} we get $i(D_j)=i(D_j')$ for any $j=1,\ldots,s$.
From the stability of $D_j$ we get $\varphi_1(D_j)=\varphi_1(D_j')$
for any $j=1,\ldots,s$.

By assumption we can extend $\preceq$ to a partial ordering on $T$.
Without loss of generality we can assume that $D_1$ is minimal.
We define $m(\alpha):=\min\{\beta\in\NN:(\alpha,\beta)\in D_1\}$
for $\alpha\in\NN$ such that $\varphi_1(D_1)(\alpha)>0$,
and $m(\alpha):=0$ otherwise. Since the vertical sections of $D_1$ are
segments, $\varphi_1(D_1)(\alpha)=\varphi_1(D_1')(\alpha)$,
and $D_1$ is minimal we get:
\begin{multline}\label{eq05}
m(\alpha)+(m(\alpha)+1)+\ldots+(m(\alpha)+\varphi_1(D_1)(\alpha)-1)\\
=\frac{(\varphi_1(D_1)(\alpha)-1)\varphi_1(D_1)(\alpha)}{2}
+\varphi_1(D_1)(\alpha)\cdot m(\alpha)
\leq
\sum_{(\alpha,\beta)\in D_1'}\beta
\end{multline}
Summing up \eqref{eq05} for all possible $\alpha$, we get:
\begin{multline}\label{eq06}
\#D_1\cdot c_2(D_1)
=\sum_{\alpha\in\NN}\frac{(\varphi_1(D_1)(\alpha)-1)\varphi_1(D_1)(\alpha)}{2}
+\varphi_1(D_1)(\alpha)\cdot m(\alpha)\\
\leq\sum_{\alpha\in\NN}\sum_{(\alpha,\beta)\in D_1'}\beta
=\#D_1'\cdot c_2(D_1')
\end{multline}
Due to $\#D_1\cdot c_2(D_1)=\#D_1'\cdot c_2(D_1')$
we get equations in both \eqref{eq06} and \eqref{eq05}.
This implies $D_1=D_1'$.
According to the induction hypothesis we already know that
the tiling $T\setminus\{D_1\}$ is unique. Hence, from
$f:T\setminus\{D_1\}\simeq T'\setminus\{D_1\}$, we get
$D_2=D_2',\ldots,D_r=D_r'$.
\end{proof}

\section{The proof of Theorem \ref{main1}}

The proof shall be divided into several lemmas. The first
of these gives an explanation of why the linear system which
fullfils one of the conditions from Theorem \ref{main1}
is a special linear system.

\begin{uwaga}
One can consider using the Cremona transformation as a method
of verifing the specialty of these linear systems
(see for example \cite{Dum-Jar}). This is due to the fact
that every complete linear system of the form $\cL_{(d,e)}(m_1,\ldots,m_r)$
(over $\PP^1\times\PP^1$) is isomorphic to a linear system over $\PP^2$
(see \cite{CGG1} for more details).
\end{uwaga}

\begin{lemat}
The linear systems listed in the hypothesis of Theorem \ref{main1}
are special.
\begin{proof}
Throughout the proof we will refer to the following two
observations. We omit their proofs, as they are very simple.
\begin{obserwacja}\label{obs_aux1}
Consider a diagram $D$ and numbers $m_1,\ldots,m_r\geq 1$.
The following properties hold:
\begin{itemize}
 \item[$(i)$] if $\#D>\sum_i\binom{m_i+1}{2}$ then system
  $\cL_D(m_1,\ldots,m_r)$ is non-empty
  (i.e. it contains a non-zero polynomial)
 \item[$(ii)$] if we have equation in $(i)$ then the system
  $\cL_D(m_1,\ldots,m_r)$ is non-empty if and only if it is special
\end{itemize}
\end{obserwacja}

\begin{obserwacja}\label{obs_aux2}
Let $D$, $D'$ be diagrams, and $m_1,m_1',\ldots,m_r,m_r'\in\NN$
(some of them can be zero). Then, the following is true:
\begin{equation*}
\cL_D(m_1,\ldots,m_r)\cdot\cL_{D'}(m_1',\ldots,m_r')\s
\cL_{D+D'}(m_1+m_1',\ldots,m_r+m_r')
\end{equation*}
where $D+D'=\{d+d'\mid d\in D, d'\in D'\}$.
Furthermore, if the systems on the left are non-empty, then:
\begin{multline*}
\dim_L\cL_{D+D'}(m_1+m_1',\ldots,m_r+m_r')\\
\geq\max\{\dim_L\cL_D(m_1,\ldots,m_r),\dim_L\cL_D'(m_1',\ldots,m_r')\}
\end{multline*}
\end{obserwacja}

Let us begin with a system of form $(2)$.
W assume that $q+r=2k+1$ for some $k\geq 0$ and
claim that the following system is special:
$$
\cL_{(2,q+2r-1)}(2^{\times q},3^{\times r})
.
$$
Thanks to Observation \ref{obs_aux1}$(ii)$ it is sufficient
to state that the system is non-empty. From Observation \ref{obs_aux2}
it follows that:
$$
(\cL_{(1,k)}(1^{\times q},1^{\times r}))^2
\cdot \cL_{(0,r)}(0^{\times q},1^{\times r})\s
\cL_{(2,q+2r-1)}(2^{\times q},3^{\times r})
$$
The factors are non-empty due to Observation \ref{obs_aux1}$(i)$
as the coresponding diagrams have respectively $2\cdot(k+1)$ and $r+1$
elements. The thesis is now a consequence of Observation \ref{obs_aux2}.

The same type of reasoning can be applied for $(3.1)$, $(3.4)$ and $(4)$.
The following inclusions should be considered:
\begin{align*}
\cL_{(2,2n)}(0,2,2^{\times 2n})\cdot
\cL_{(1,n)}(0,1,1^{\times 2n})\;\s\; &
\cL_{(3,3n)}(0,3,3^{\times 2n})
\\
\cL_{(2,2n+2)}(2^{\times 2},2^{\times 2n+1})\cdot
\cL_{(1,n)}(0^{\times 2},1^{\times 2n+1})\;\s\; &
\cL_{(3,3n+2)}(2^{\times 2},3^{\times 2n+1})
\\
\cL_{(2,4)}(2^{\times 5})\cdot
\cL_{(2,1)}(1^{\times 5})\;\s\; &
\cL_{(4,5)}(3^{\times 5})
\end{align*}
The factors that contain base points of multiplicity $2$
are non-empty due to the case $(2)$.

We will need a more datailed estimation for $(3.2)$ and $(3.3)$.
Let us consider the following inclusions:
\begin{align*}
\cL_{(2,2n)}(0,2,2^{\times 2n})\cdot
\cL_{(1,n)}(0,0,1^{\times 2n})\:\s\; &
\cL_{(3,3n)}(0,2,3^{\times 2n})
\\
\cL_{(2,2n)}(0,2,2^{\times 2n})\cdot
\cL_{(1,n)}(1,0,1^{\times 2n})\;\s\; &
\cL_{(3,3n)}(1,2,3^{\times 2n})
\\
\cL_{(2,2n)}(0,0,2^{\times 2n+1})\cdot
\cL_{(1,n+1)}(0,0,1^{\times 2n+1})\;\s\; &
\cL_{(3,3n+1)}(0,0,3^{\times 2n+1})
\\
\cL_{(2,2n)}(0,1,2^{\times 2n+1})\cdot
\cL_{(1,n+1)}(0,1,1^{\times 2n+1})\;\s\; &
\cL_{(3,3n+1)}(0,1,3^{\times 2n+1})
\\
\cL_{(2,2n)}(1,1,2^{\times 2n+1})\cdot
\cL_{(1,n+1)}(1,1,1^{\times 2n+1})\;\s\; &
\cL_{(3,3n+1)}(1,1,3^{\times 2n+1})
\end{align*}
For each case we can easily compute the dimension
of the second factor, which equals respectively: $2,1,3,2,1$
(the systems containing points of multiplicity $1$ are alway non-special).
From Observation \ref{obs_aux2} we know that the dimension
of the system on the right side of the inclusion is at least: $2,1,3,2,1$,
which is more than the expected dimension. Therefore, all systems on the
right are special.

Finally, we move to the cases of $(0)$ and $(1)$.
Consider the system
$\cL_{(1,e)}(1^{\times p},2^{\times q},3^{\times r})$.
Suppose that $p+3q+5r\leq 2e+1$ and $r\geq 1$.
Let us denote
$M=M_{(1,e)}(1^{\times p},2^{\times q},3^{\times r})$
Our goal is to show that each minor of $M$ of the maximal rank is zero.

Evry $6$ points arbitrarily choosen from the diagram
$\{0,1\}\times\{0,\ldots,e\}$ are contained in two lines. Therefore,
it follows from Proposition \ref{prop02} that the rows of $M$
corresponding to a point of multiplicity $3$ (we assumed that there
is at least one such point) are linearly dependent.
Hence, if only $p+3q+6r\leq 2e+2$ (i.e. the number of rows does
not exceed the number of columns), the rank of $M$ can not be maximal.

Suppose that $p+3q+6r>2e+2$ (i.e. there are more rows than columns).
From $p+3q+5r\leq 2e+1$ it follows that among any $2e+2$ rows there
are at least $6$ rows coresponding to the same point of multiplicity $3$.
But, as before observed, these rows are linearly dependent.

Let us consider the system
$\cL_{(0,e)}(1^{\times p},2^{\times q},3^{\times r})$.
We assume that $p+2q+3r\leq e$ and $q\geq 1$ or $r\geq 1$.
Let $M=M_{(0,e)}(1^{\times p},2^{\times q},3^{\times r})$.
If the number of rows does not exceed the number of columns
we proceed as before. Otherwise,
from $p+2q+3r\leq e$ it follows that among any $e+1$ rows
there are at least $3$ rows corresponding to a point of multiplicity $2$
or at least $4$ rows corresponding to a point of multiplicity $3$.
We can apply the previous arguments to the former case.
To deal with the latter case, observe that given any $4$ rows
coresponding to a point of multiplicity $3$, at least one of
them is zero. Indeed, each monomial of the form $X^{\alpha}Y^{\beta}$,
where $\alpha\leq 1$, becomes zero after calculating the second
derivative with respect to $X$.
\end{proof}
\end{lemat}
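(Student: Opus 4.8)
The plan is to prove the lemma case by case, splitting the list of systems in Theorem~\ref{main1} into two families according to the value of $d$. For $d\geq 2$ --- cases $(2)$, $(3.1)$--$(3.4)$ and $(4)$ --- I would argue geometrically, realising each system as a product of simpler systems and propagating a dimension bound through Observation~\ref{obs_aux2}. For $d\leq 1$ --- cases $(0)$ and $(1)$ --- I would argue linearly, showing directly that the matrix $M$ of the system fails to have maximal rank.

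For the geometric family the guiding tool is the multiplicativity inclusion $\cL\cdot\cL'\s\cL''$ of Observation~\ref{obs_aux2}, which for non-empty factors forces $\dim\cL''\geq\max\{\dim\cL,\dim\cL'\}$. In cases $(2)$, $(3.1)$, $(3.4)$ and $(4)$ a direct count shows that the expected dimension \eqref{eq_edim} equals $-1$; there specialty is equivalent to non-emptiness, so I would only need to exhibit each system as a product $\cL\cdot\cL'\s\cL''$ of two non-empty factors (a product of nonzero polynomials being nonzero). Non-emptiness of a given factor would follow either from the cardinality count of Observation~\ref{obs_aux1}$(i)$, or, for the factors carrying base points of multiplicity~$2$, recursively from the case~$(2)$ established first within this family. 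Cases $(3.2)$ and $(3.3)$ are not of expected dimension $-1$, so non-emptiness alone is insufficient and I would need a strictly positive lower bound on the dimension; the device is to arrange the factorisation so that one factor carries only base points of multiplicity~$1$. Such a system is automatically non-special, so its dimension equals the expected value, which can be read off at once; Observation~\ref{obs_aux2} then bounds $\dim\cL''$ from below by this value, and comparison with \eqref{eq_edim} shows the product strictly exceeds its own expected dimension.

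For the linear family I would show that $M=M_{(d,e)}(1^{\times p},2^{\times q},3^{\times r})$ is rank-deficient. When $d=1$ the diagram lies in the two columns $\alpha\in\{0,1\}$, so any six of its points lie on the conic $X(X-1)=0$; by Proposition~\ref{prop02} the six-point sub-diagram attached to a base point of multiplicity~$3$ is therefore special of degree~$3$, and the corresponding six rows of $M$ are linearly dependent. A counting argument then disposes of both shapes of $M$: if $M$ has no more rows than columns, these dependent rows already obstruct full rank; if $M$ is tall, the bound $p+3q+5r\leq 2e+1$ forces any selection of $2e+2$ rows to contain, by pigeonhole, all six rows of some multiplicity-$3$ point, so every maximal minor vanishes. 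When $d=0$ the diagram is a single column --- a single line --- and the same scheme works with multiplicity~$2$ in the role of multiplicity~$3$; moreover, since differentiating any monomial $Y^{\beta}$ with respect to $X$ gives $0$, the three conditions of a multiplicity-$3$ point with $\alpha\geq 1$ are identically zero, so any four of its rows already contain a zero row. The analogous count, now using $p+2q+3r\leq e$, again forces every maximal minor to vanish.

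The main obstacle I anticipate is not any single deep step but the design work in the geometric family: for each case one must guess a factorisation whose bidegrees sum to $(d,e)$ and whose prescribed multiplicities split correctly between the two factors, all while keeping both factors non-empty --- and, in $(3.2)$--$(3.3)$, concentrating every multiplicity-$1$ point into one factor so that its dimension is exactly computable. In the linear family the delicate point is making the pigeonhole count watertight for tall matrices, so that \emph{every} maximal square submatrix is guaranteed to contain a full rank-deficient block of rows; once that is secured, the remaining verifications are routine.
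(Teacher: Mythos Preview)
Your proposal is correct and follows essentially the same approach as the paper: the same split into a ``geometric'' family $d\geq 2$ handled by product inclusions via Observation~\ref{obs_aux2} (with cases $(3.2)$--$(3.3)$ singled out for an explicit dimension count through a multiplicity-$1$ factor), and a ``linear'' family $d\leq 1$ handled by the rank-deficiency argument on $M$ using Proposition~\ref{prop02} plus a pigeonhole count for tall matrices. The only cosmetic difference is that the paper spells out the concrete factorisations in each case, which is exactly the ``design work'' you flag as the main remaining task.
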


\begin{uwaga}
In the following lemmas we prove non-specialty of a large
class of linear systems. What we actually show, is the following:

\textit{Given a system of the form
$\cL_{(d,e)}(1^{\times p},2^{\times q},3^{\times})$,
for which none of the conditions $(0)-(4)$ from Theorem \ref{main1}
hold, there exists a tiling that fulfills both the
hypothesis of Theorem \ref{prop08} and Theorem \ref{tw13}.}
\end{uwaga}

For $k,l\geq 1$ we shall use the notion
$k\times l:=\{0,\ldots,k-1\}\times\{0,\ldots,l-1\}$.
In several cases we will write
$\cL_{k\times l}(\ldots)$ instead of $\cL_{(k-1,l-1)}(\ldots)$.
It is obvious, that if a proper tiling is already constructed
for a system of the form $\cL_D(2^{\times q},3^{\times r})$
we can add some $1-$diagrams (see Example \ref{ex_sing})
so as to achive the
desired value of $p$. Therefore, for simplicity, we always
assume that $p=0$. The assumption from Theorem \ref{tw13}
concerning the possibility of extending relation $\preceq$
to a partial ordering, shall be alway fulfield.
This will follow immediately from Observation \ref{obs12}$(ii)$.
\begin{lemat}\label{lem_aux1}
For any $d,e\geq 5$ and $p,q,r\geq 0$, a system of the form
$\cL_{(d,e)}(1^{\times p},2^{\times q},3^{\times r})$
is non-special.
\begin{proof}
Let $k=d+1$, $l=e+1$. Without losing generality we can assume that
$kl-3<3q+6r<kl+6$ and $p=0$. We shall prove that it is sufficient to
consider $k,l<12$. Let us observe that any rectangular diagram
of height $6$ can be tiled with either $6-$diagrams or $3-$diagrams
by means of the schemes presented in Figure \ref{fig:6x}.
\begin{figure}
\epsfbox{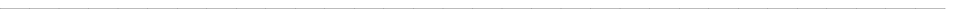}
\caption{
See Lemma \ref{lem_aux1}
\label{fig:6x}
}
\end{figure}
Now suppose, that $6\leq k\leq l$ and $l\geq 12$.
From $kl-3\leq 3p+6q$ it follows that $q\geq k$ or $p\geq 2k$.
If the former holds then we can tile a rectangle $k\times 6$
and reduce the problem to a smaller rectangle
$k\times (l-6)$, while still having $l-6\geq 6$ and $k(l-6)-3<3p+6(q-k)$.
We proceed in the same way in case of $p\geq 2k$.

Therefore, the problem of finding a proper tiling can
be reduced to a fininte number of cases, namely $k,l<12$
and $kl-3<3q+6r<kl+6$. Some possible schemes that covers
all neccessery constructions can be found in Figure \ref{fig:6_schemes}
at the end of this paper.
Because each $6-$diagram, apart from the ''triangular'' one, can
be divided into $3-$diagrams, even the use of only $6-$diagrams
satisfies all cases.

For $k=6$ and $k=9$
the ''triangular'' diagram has to be used (see Figure \ref{fig:6_schemes}).
To address this problem
one can find a way of covering a ''triangular'' diagram together with one
of its ''neighbours'' using four $3-$diagrams.
\end{proof}
\end{lemat}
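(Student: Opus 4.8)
The plan is to recast the statement entirely in the language of tilings and to apply Theorems~\ref{prop08} and~\ref{tw13}. Writing $k=d+1$ and $l=e+1$, so that $k,l\geq 6$, the diagram $D$ is the rectangle $k\times l$, and a base point of multiplicity $1$, $2$ or $3$ consumes a $1$-, $3$- or $6$-diagram respectively. By Theorem~\ref{tw13} it is enough to tile a suitable region by \emph{stable} diagrams (the $3$- and $6$-diagrams of Figure~1, together with singletons, which are stable by Example~\ref{ex_sing}) whose relation $\preceq$ is antisymmetric; by Observation~\ref{obs12}$(ii)$ the latter is automatic for the tilings I will build, so uniqueness of the tiling is never the real difficulty. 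Theorem~\ref{prop08} then delivers non-specialty, via $(i)$ when the tiles sit inside $D$ and via $(ii)$ when they are arranged to cover $D$.

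First I would make two normalising reductions. Since singletons may be inserted into or deleted from a tiling freely, I may take $p=0$ and deal only with the multiplicity-$2$ and multiplicity-$3$ points. Moreover, the essential regime is the one in which the total tile area $3q+6r$ is close to the rectangle area $kl$: when $3q+6r$ is smaller the tiles fit comfortably inside $D$ and condition $(i)$ applies, while when it is larger the tiles can be made to cover $D$ and condition $(ii)$ applies, forcing the (expectedly empty) system to be non-special. I therefore assume $p=0$ and $kl-3<3q+6r<kl+6$.

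The engine of the argument is that a strip of height $6$ can be tiled exactly either by $6$-diagrams or by $3$-diagrams (Figure~\ref{fig:6x}). Assuming $6\leq k\leq l$ and $l\geq 12$, the window bound $3q+6r>kl-3\geq 12k-3$ forces $r\geq k$ or $q\geq 2k$; in the first case I peel off a $k\times 6$ strip tiled by $k$ six-diagrams, in the second by $2k$ three-diagrams. Either move passes the problem to the rectangle $k\times(l-6)$ with $r$ (respectively $q$) decreased in such a way that the window $kl-3<3q+6r<kl+6$ is preserved verbatim, while $l-6\geq 6$ is maintained. Iterating this, and exploiting the symmetry $k\leftrightarrow l$, reduces the whole problem to the finitely many cases $6\leq k,l<12$ inside the window, which I would then settle by the explicit schemes of Figure~\ref{fig:6_schemes}.

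The step I expect to be the main obstacle is precisely this finite case analysis, and within it the ``triangular'' $6$-diagram. Every other $6$-diagram splits into two $3$-diagrams, so if only those occurred any admissible pair $(q,r)$ could be realised simply by subdividing; the triangular tile does not split, and for the values $k=6$ and $k=9$ the packing of the rectangle forces its use. The remedy is local: re-tile the triangular diagram together with one of its neighbours, converting two $6$-diagrams into four $3$-diagrams and thereby trading two multiplicity-$3$ points for four multiplicity-$2$ points. Supplying these local re-tilings and verifying that the schemes exhaust every residual rectangle in the window is the only place where genuine casework, rather than the clean strip reduction, cannot be avoided.
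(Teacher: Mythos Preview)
Your proposal is correct and follows essentially the same approach as the paper: the reduction to $p=0$ and the window $kl-3<3q+6r<kl+6$, the strip-peeling argument using height-$6$ tilings to bring $k,l$ below $12$, the appeal to the explicit schemes in Figure~\ref{fig:6_schemes}, and the local re-tiling of the triangular $6$-diagram with a neighbour into four $3$-diagrams are all exactly as in the paper. Your version is in fact notationally cleaner, since the paper's text contains apparent slips (writing ``$q\geq k$ or $p\geq 2k$'' after having set $p=0$) which you have silently corrected to the intended ``$r\geq k$ or $q\geq 2k$''.
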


\begin{lemat}\label{lem_aux2}
For any $e\geq 6$ and $p,q,r\geq 0$, a linear system
of the form
$\cL_{(4,e)}(1^{\times p},2^{\times q},3^{\times r})$
is non-special.
\begin{proof}
Let us denote $k=e+1$.
We proceed as in the proof of Lemma \ref{lem_aux1}.
We will reduce the problem
of finding appropriate tiling to a finite number of
cases, namely $k<19$.

We can assume that $p=0$ nad $5k-3<3q+6r$. Therefore, we get
$q\geq 10$ or $r\geq 10$. If $q\geq 10$ then the problem can be
reduced to the rectangle $5\times (k-12)$ as one can tile
the rectangle $5\times 12$ using the example presented
in Figure \ref{fig:5x}.
\begin{figure}
\epsfbox{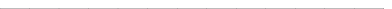}
\caption{
See Lemma \ref{lem_aux2}
\label{fig:5x}
}
\end{figure}
If $r<10$ and $q\geq 10$, we reduce to the rectangle $5\times (k-6)$.
The schemes that cover the case of $k<19$ can be found
in Figure \ref{fig:5_schemes}.
\end{proof}
\end{lemat}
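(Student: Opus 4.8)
The plan is to reproduce the strategy of Lemma \ref{lem_aux1}: to exhibit, for each admissible triple $(p,q,r)$, a tiling $T$ of (a neighbourhood of) the underlying rectangle by $3$-diagrams and $6$-diagrams that is unique, and then to invoke Theorem \ref{prop08}. Writing $k=e+1$, the diagram of $\cL_{(4,e)}(1^{\times p},2^{\times q},3^{\times r})$ is the rectangle $5\times k$, which has $5k$ cells, while base points of multiplicities $1,2,3$ correspond to diagrams of $1,3,6$ cells respectively. As in the remark preceding Lemma \ref{lem_aux1}, I would first dispose of the simple points by setting $p=0$ and recovering the general $p$ afterwards by inserting singletons (which are stable by Example \ref{ex_sing}). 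Uniqueness of every tiling I build is then guaranteed by Theorem \ref{tw13}: the pieces are stable by construction, and the relation $\preceq$ extends to a partial ordering because, for pieces packed into a rectangle, no two distinct diagrams are mutually $\preceq$-comparable, so Observation \ref{obs12}$(ii)$ applies.

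Following the reduction of Lemma \ref{lem_aux1} (and Observations \ref{obs_aux1}, \ref{obs_aux2}), I may restrict attention to the critical window $5k-3<3q+6r<5k+6$. When $3q+6r\leq 5k$ the tiling is packed inside the rectangle (hypothesis $(i)$ of Theorem \ref{prop08}), and the at most two uncovered cells account for the positive expected dimension; when $3q+6r\geq 5k$ the tiling is allowed to protrude by fewer than six cells and one uses hypothesis $(ii)$ instead. The crucial feature of this window is that the quantity $3q+6r-5k$, which decides between $(i)$ and $(ii)$, is invariant under removing a fully tiled strip of height divisible by $3$: peeling a $5\times 12$ or a $5\times 6$ block decreases both $3q+6r$ and $5k$ by the same amount, so the window is preserved throughout.

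The second step is the reduction of $k$ to a finite range. For $k\geq 19$ the lower bound of the window gives $3q+6r>5k-3\geq 92$, whence $q$ or $r$ is large enough to peel off a horizontal strip: a $5\times 12$ block (resp. a $5\times 6$ block) that can be tiled exactly from the available supply of pieces, as in Figure \ref{fig:5x}, reducing the problem to the rectangle $5\times(k-12)$ (resp. $5\times(k-6)$) while staying in the window. Iterating, I arrive at the finitely many cases $k<19$, each of which is to be settled by an explicit scheme collected in Figure \ref{fig:5_schemes}.

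The main obstacle is precisely this last, combinatorial, step: one must produce, for every $k<19$ and every $(q,r)$ in the window, a concrete packing by stable $3$- and $6$-diagrams and check that the boundaries match so that the tiling is honest. The delicate point, already visible in Lemma \ref{lem_aux1}, is the \emph{triangular} $6$-diagram, which cannot be cut into two $3$-diagrams; whenever counting forces it to appear I would, as there, absorb it together with a neighbouring piece into a block covered by four $3$-diagrams. Verifying that the finite catalogue in Figure \ref{fig:5_schemes} genuinely exhausts all residual cases is the part that really requires care (and, realistically, a computer-assisted enumeration), the remainder being a routine transcription of the argument of Lemma \ref{lem_aux1}.
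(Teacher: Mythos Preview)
Your proposal is correct and follows essentially the same route as the paper: reduce to $p=0$ and the critical window $5k-3<3q+6r<5k+6$, peel off a $5\times 12$ (using ten $6$-diagrams) or a $5\times 6$ strip (using ten $3$-diagrams) whenever $k\geq 19$, and then settle the residual cases $k<19$ by the explicit schemes of Figure~\ref{fig:5_schemes}. Your remark on the invariance of $3q+6r-5k$ under peeling makes the reduction slightly more explicit than in the paper, and the caution about the triangular $6$-diagram is harmless though in fact not needed for width~$5$.
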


\begin{lemat}\label{lem_aux4}
The system $\cL_{(4, 5)}(1^{\times p},2^{\times q},3^{\times r})$
is special if and only if $p=q=0$ and $r=5$.
\begin{proof}
The schemes presented in Figure \ref{fig:5x6} cover all possible cases.
\begin{figure}
\epsfbox{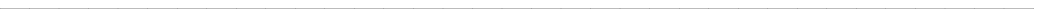}
\caption{
See Lemma \ref{lem_aux4}
\label{fig:5x6}
}
\end{figure}
\end{proof}
\end{lemat}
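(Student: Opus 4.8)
The \emph{if} direction requires nothing new: the triple $p=q=0$, $r=5$ is exactly case $(4)$ of Theorem \ref{main1}, whose specialty was proved in the first lemma of this section. It remains to establish the \emph{only if} direction, i.e. that $\cL_{(4,5)}(1^{\times p},2^{\times q},3^{\times r})$ is non-special for every triple $(p,q,r)\neq(0,0,5)$. Here the diagram is the rectangle $5\times 6$, so there are $N:=30$ columns, and the number of imposed conditions equals $S:=p+3q+6r$. My plan is to certify each non-special case by Theorem \ref{prop08}, using Theorem \ref{tw13} to guarantee uniqueness: the tilings will be assembled from $1$-diagrams (stable by Example \ref{ex_sing}), $3$-diagrams and $6$-diagrams, arranged so that no two distinct pieces $D,D'$ satisfy both $D\preceq D'$ and $D'\preceq D$, which by Observation \ref{obs12}$(ii)$ is exactly what makes $\preceq$ extend to a partial ordering.

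First I would cut the a priori infinite family of triples down to a finite list, exactly as in the reductions of Lemmas \ref{lem_aux1} and \ref{lem_aux2}. When $S\le N$, non-specialty means full row rank, a property inherited by every subsystem (deleting rows), so it suffices to treat the exactly determined triples with $S=30$, every triple with $S<30$ being a subsystem of such a one; these are parametrised by pairs $(q,r)$ with $q+2r\le 10$ and $(q,r)\neq(0,5)$, and, taking $p=30-3q-6r$, are tiled in the figure by $q$ $3$-diagrams, $r$ $6$-diagrams and the remaining cells by $1$-diagrams. When $S> N$, non-specialty means full column rank, which is preserved under adjoining rows; hence every overdetermined triple that can be brought down, by lowering multiplicities and deleting points, to an exactly determined non-special sub-configuration is itself non-special. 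A short bookkeeping shows that the only overdetermined triples which cannot be reduced in this way to an $S=30$ configuration different from $(0,0,5)$ are those of the shape $(p,0,5)$ with $p\ge 1$, and these all dominate the single case $(1,0,5)$. Thus the entire statement reduces to the finite collection of exact tilings of $5\times 6$ together with the one anchor $(1,0,5)$, which is what Figure \ref{fig:5x6} is meant to display; for the anchor one applies Theorem \ref{prop08}$(ii)$ to a $31$-cell region containing $5\times 6$ that is tiled by five $6$-diagrams and a single $1$-diagram.

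The main obstacle I foresee is verifying, piece by piece, that the arrangements drawn in Figure \ref{fig:5x6} are genuinely \emph{unique} tilings, and especially the delicate boundary at $(0,0,5)$. The point is that a six-cell shape is a $6$-diagram only if no conic passes through its points (Proposition \ref{prop02}); in particular the $2\times 3$ rectangle is \emph{not} admissible, since the conic formed by the two lines carrying its columns vanishes on all six points, so only the ''triangular'' staircase and its stable relatives may be used. Consequently there is no way to fill $5\times 6$ with five $6$-diagrams so as to obtain a stable tiling on which $\preceq$ is a partial ordering, which is precisely why the method correctly refuses to certify $(0,0,5)$, in agreement with its specialty. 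The genuinely subtle construction is therefore $(1,0,5)$: its natural reduction target is the forbidden $(0,0,5)$, so it must be handled directly by a covering tiling, and producing such a tiling that is both unique and compatible with $\preceq$ is the one step I expect to demand real care. Granting the finitely many constructions of Figure \ref{fig:5x6}, Theorem \ref{tw13} supplies their uniqueness and Theorem \ref{prop08} then yields non-specialty, completing the \emph{only if} direction.
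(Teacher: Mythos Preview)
Your proposal is correct and follows essentially the same route as the paper. The paper's own proof is a single sentence deferring to Figure~\ref{fig:5x6}; you have supplied the logical scaffolding that explains \emph{why} a finite collection of tilings suffices (reduction to $S=30$ via row--submatrix arguments, the splitting of $6$-diagrams into $3$-diagrams, and padding with $1$-diagrams), and you have correctly isolated the one configuration that genuinely requires a covering tiling via Theorem~\ref{prop08}$(ii)$, namely $(1,0,5)$. One small imprecision: among the overdetermined triples $(p,0,5)$, only $p=1$ and $p=2$ actually fail to reduce to a non-special $S=30$ configuration (for $p\ge 3$ one may lower a multiplicity~$3$ to~$2$ and reach $(3,1,4)$), but since you handle all of them through $(1,0,5)$ this does not affect the argument.
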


\begin{lemat}\label{lem_aux5}
Consider numbers $3\leq e$, $p,q,r\geq 0$
and let $n$ be such that $0\leq e-3n<3$.
Supposing that none of the following conditions hold
(see Theorem \ref{main1}):
\begin{itemize}
 \item[$(3.1)$] $e=3n$, $p=q=0$ and $r=2n+1$
 \item[$(3.2)$] $e=3n$, $p\leq 1$, $q=1$ and $r=2n$
 \item[$(3.3)$] $e=3n+1$, $p\leq 2$, $q=0$ and $r=2n+1$
 \item[$(3.4)$] $e=3n+2$, $p=0$, $q=2$ and $r=2n+1$
\end{itemize}
then the system
$\cL_{(3,e)}(1^{\times p},2^{\times q},3^{\times r})$
is non-special.
\begin{proof}
Observe that if $r\geq 2n+2$ then the rectangle $4\times (e+1)$
can be covered with $n+1$ rectangle $4\times 3$, and each such
rectangle can be tiled witch two $6-$diagrams.

When $r\leq 2n-1$ then one of the algorithms presented
in Figure \ref{fig:alg4x} can
be used to consturct a tiling that fulfills the hypothesis
of Theorem \ref{prop08}. The block on the right should be
used if the number of $6-$diagrams is even.
\begin{figure}
\epsfbox{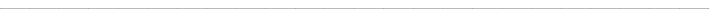}
\caption{
See Lemma \ref{lem_aux5}
\label{fig:alg4x}
}
\end{figure}
We still need to construct a tiling for $r=2n$ and $r=2n+1$.
Observe that using $n-1$ rectangles $4\times 3$, we reduce
the problem to $r=2$ or $r=3$ and $e+1\in\{4,5,6\}$.
Since we assumed that none of the conditions (3.1-4) hold,
one of the schemes in Figure \ref{fig:4x456} can be used to produce
the desired tiling.
\begin{figure}
\epsfbox{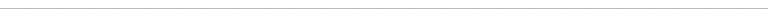}
\caption{
See Lemma \ref{lem_aux5}
\label{fig:4x456}
}
\end{figure}
\end{proof}
\end{lemat}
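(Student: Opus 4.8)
The plan is to deduce non-specialty from the existence of a unique tiling of the rectangular diagram $D=4\times(e+1)$, so that Theorem \ref{prop08} together with Theorem \ref{tw13} apply. I would first reduce to $p=0$: since every singleton is a stable diagram (Example \ref{ex_sing}), once the multiplicity-$2$ and multiplicity-$3$ part is tiled the $p$ simple base points are absorbed by adjoining $p$ one-diagrams. The building blocks are the three-diagrams, one for each point of multiplicity $2$ as $\binom{3}{2}=3$, and the six-diagrams, one for each point of multiplicity $3$ as $\binom{4}{2}=6$; all of these are stable, being the three- and six-diagrams classified earlier. The order hypothesis of Theorem \ref{tw13} is cheap to check via Observation \ref{obs12}$(ii)$, so the entire argument reduces to producing, for each admissible triple $(e,q,r)$, an exact covering by $q$ three-diagrams and $r$ six-diagrams of a diagram that either contains or is contained in $D$. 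As in Lemma \ref{lem_aux1}, I may assume the number of imposed conditions $3q+6r$ differs from $\#D=4(e+1)$ by a bounded amount, which keeps the constructions finite.

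Writing $e=3n+s$ with $s=e-3n\in\{0,1,2\}$, the core of the proof is a trichotomy on $r$. If $r\ge 2n+2$ then $6r\ge 12n+12\ge 4(e+1)$, so the system is over-determined and I would apply Theorem \ref{prop08}$(ii)$: enclose $D$ in a taller rectangle whose height is a multiple of $3$ and tile it by $4\times 3$ blocks, each of which splits into two six-diagrams, using three-diagrams to absorb the (bounded) surplus coming from $q$. If $r\le 2n-1$ the system is under-determined, and I would place a tiling inside $D$ by the periodic ``algorithmic'' schemes, selecting the variant according to the parity of the number of six-diagrams so that the remaining cells form an admissible block. The delicate band is $r\in\{2n,2n+1\}$: here I would peel off $n-1$ blocks $4\times 3$, each contributing two six-diagrams, lowering the height to $(e+1)-3(n-1)=s+4\in\{4,5,6\}$ and leaving $r'=r-2(n-1)\in\{2,3\}$ together with the original $q$.

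This leaves finitely many base cases: the rectangles $4\times\{4,5,6\}$ carrying $r'\in\{2,3\}$ six-diagrams and $q$ three-diagrams (with $p=0$), for which I would exhibit explicit tilings meeting the hypotheses of Theorems \ref{prop08} and \ref{tw13}. I expect these base cases to be the main obstacle, since the critical band $r\in\{2n,2n+1\}$ is exactly where the special systems of Theorem \ref{main1} occur: the construction must depend on the precise values of $q$ and $s$, the excluded configurations $(3.1)$--$(3.4)$ being precisely those residuals for which no covering exists (and which were already shown to be special). One then has to verify, case by case, that every admissible residual does admit a covering and that each piece is a genuine stable three- or six-diagram whose $\preceq$-relation extends to a partial order. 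By comparison, the bulk ranges $r\ge 2n+2$ and $r\le 2n-1$ are routine once the periodic block tilings are fixed.
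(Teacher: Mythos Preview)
Your proposal is correct and follows essentially the same route as the paper: the same trichotomy on $r$ (the ranges $r\ge 2n+2$, $r\le 2n-1$, and $r\in\{2n,2n+1\}$), the same reduction by peeling off $n-1$ copies of $4\times 3$ in the critical band to land on $e+1\in\{4,5,6\}$ with $r'\in\{2,3\}$, and the same appeal to explicit finite schemes for the residual cases together with the parity-dependent periodic blocks for $r\le 2n-1$. Your framing of the $p=0$ reduction, the stability/order verification via Observation~\ref{obs12}(ii), and the use of Theorem~\ref{prop08}(ii) in the over-determined range is exactly what the paper does (implicitly in places), so nothing substantive is missing.
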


\begin{figure}
\epsfbox{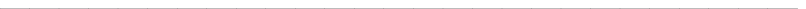}
\caption{
See Lemma \ref{lem_aux2}
\label{fig:5_schemes}
}
\end{figure}

\pagebreak

\begin{figure}
\epsfbox{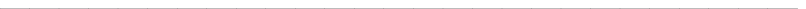}
\caption{
See Lemma \ref{lem_aux1}
\label{fig:6_schemes}
}
\end{figure}

\end{document}